\providecommand{\U}[1]{\protect\rule{.1in}{.1in}}
\newtheorem{theorem}{Theorem}
\newtheorem{corollary}[theorem]{Corollary}
\newtheorem{definition}[theorem]{Definition}
\newtheorem{remark}[theorem]{Remark}
\newenvironment{proof}[1][Proof]{\noindent\textbf{#1.} }{\ \rule{0.5em}{0.5em}}
\begin{document}

\title{Analysis of the Fractional Integrodifferentiability of Power Functions and
Hypergeometric Representation}
\author{Rodrigues F.G$^{1}$. and Capelas de Oliveira E$^{2}$.\\1- Departamento de Matem\'{a}ticas, Universidad de La Serena, Chile.\\2- Departamento de Matem\'{a}tica Aplicada, IMECC-UNICAMP, Brasil.}
\maketitle

\begin{abstract}
Fractional calculus has gained enormous visibility among the scientific
community in last few decades and yet there are still many controversial and
open theoretical discussions. In this work we want to elucidade in a simple
way that it is possible to calculate the fractional integrals and derivatives
of order $\alpha$ (using the Riemann-Liouville formulation) of power functions
$\left(  t-\ast\right)  ^{\beta}$ with $\beta$ being any real value, so long
as one pays attention to the proper choosing of the lower and upper limits
according to the original function's domain. This is specially oriented as a
review to those newly inducted on the process fractionally
integrodifferentiating under the (in view of the nowadays multiples
definitions) \textquotedblleft classical\textquotedblright\ Riemann-Liouville
setting. We, therefore, obtain valid expressions that are described in terms
of function series of the type $\left(  t-\ast\right)  ^{\pm\alpha+k}$ and
also observe that they are related to the famous hypergeometric functions of
the Mathematical-Physics.

\end{abstract}

\section{Introduction}

The non-integer order calculus, popularly known as fractional calculus (FC)
was born in 1695. Only after nearly 250 years did the first event dedicated
exclusively to the theme \cite{bross1,Ross1974,bross2}. Today, after more than
40 years, FC has gained enormous visibility both from a theoretical point of
view and in applications.

In the theoretical point of view we mention, in addition to the classical
formulations of the derivative (Riemann-Liouville, Caputo and
Gr\"unwald-Letnikov, just to cite a few) \cite{ecotenreiro} a recent one due
to Caputo-Fabrizio \cite{caputofabrizio} where the kernel is non singular and
whose properties were studied by Losada-Nieto \cite{losadanieto} and as an
application by Atangana \cite{atangana} in the study of Fischer's
reaction-diffusion equation. As a generalization of these derivatives we cite
the $\Psi$-Hilfer derivative \cite{ZeEco} and the general Hilfer-Hadamard
derivative \cite{DanielaEco}. On the other hand, also recent, there is a wide
class of new derivatives that recover the classical (definition) derivative in
the Newton/Leibniz sense and, even if fractional call, they are local
derivatives \cite{ZeEcolocal}.

As applications we can cite a wide range where the FC acts. Some of them are:
Yang et al. \cite{yangtenreirobzleanu} discuss anomalous diffusion models with
general fractional derivatives within the kernels of the extended
Mittag-Leffler functions; Yang \cite{yang} propose a fractional derivative of
constant and variable orders applied to anomalous relaxation models in
heat-transfer problems; Atangana and Baleanu \cite{atanganabaleanu} introduce
a new fractional derivative with non-local and non-singular kernel to discuss
a particular heat transfer model; Yang et al. \cite{yangtenreironieto} present
a new family of the local fractional partial differential equations; Yang et
al. \cite{yangtenreirocattanigao} discuss a $LC$-electric circuit modeled by a
local fractional calculus; Gao and Yang \cite{gaoyang} using local fractional
Euler's method, discuss the steady heat-conduction problem; G\'{o}mez and
Capelas de Oliveira \cite{Gomez} propose and discuss a nonlinear partial
differential equation variational iteraction method; Costa et al.
\cite{felixetal} present a nonlinear fractional Harry Dym equation whose
solution is written in terms of the Fox's $H$-function; Garrappa et al.
\cite{garrappa} discus models of dielectric relaxation based on completely
monotone functions; Rosa and Capelas de Oliveira \cite{rosaEco}, discuss some
particular fractional differential equation involving dielectrics and discuss
the complete monotonicity and Capelas de Oliveira et al. \cite{capelasmenon}
discuss analytic components for the hadronic total cross-section using Mellin transform.

When it comes to \textquotedblleft Calculus and Analysis\textquotedblright%
\ concepts such as limits, infinitesimals and continuity of functions are
among the first that, at one step after another, leads to the definitions of
the operations of differentiation and integration. Since these operations acts
on a certain suitable class of functions, it is due to their simplicity and
well understood behavior that in any introductory course to
(Standard)\ Calculus the first class of functions to be \textquotedblleft
concretely\textquotedblright\ computed are the power functions, that is,
functions of the type $f(t)=\left(  t-d\right)  ^{\beta}$, with $d,\beta\in%
\mathbb{R}
$. It is therefore logical that in the theory of the so-called
\emph{Fractional Calculus} (FC) the rules for operating these type of
functions must also be carefully analyzed and established, but there is more
to it than this simple argument. Power functions defines a very important
class of functions and there are many natural (and artificial) phenomena that
behaves according to power law distributions (e.g., Newtonian laws of gravity
and electrostatics, Kepler's third law about the orbital period of planets,
the square-cube law relating the ratio between surface area and volume of an
object, the Stefan-Boltzman law describing the power radiated from a black
body in terms of its temperature, etc...), particularly, there are an
increasing number of works showing the interesting link between FC models and
physical phenomena described in terms of power laws. Just to mention a few
examples, in \cite{Chen} it is presented a new fractional Laplacian time-space
model to describe the frequency-dependent attenuation obeying empirical power
law distribution. In \cite{Korabel} it is studied the duality of Hamiltonian
dynamics of a system of particles with power-like interactions with the
solution of certain fractional differential equations. In \cite{Caputo1} is
presented a fractional generalization of the Kelvin-Voigt rheology in order to
better simulate the power law stress-strain relation of some biological media.
In \cite{Nasholm} the authors study the link between multiple relaxation
models, power law attenuation and fractional wave equations, providing some
physically based evidence for the use of FC in the modelling processes. In
\cite{Ionescu} it is proposed a fractional model to describe a power law
relation between pain transmission processes in the human body and analgesia
measurements, and the lists may go on. So all these works suggest that having
a clear picture of the theoretical behavior and \textit{modus operandis}\ of
FC operators acting on power functions (and more generally to functions
described in terms of power series such as analytic functions) may provide us
with insights to several distinct power law related phenomena or, at the very
least, a better and concise mathematical tool available to be used in the same
way as already happens with the standard calculus.

In fact, in this work we are concerned exactly to this matter, because
although the main literature provide some basic rules for fractionally
operating power functions, we believe that there are still some fundamental
aspects that are not fully cleared and understood and the matter still require
more attention. To clarify this argument, we recall the following FC result:
Let $f(t)=\left(  t-d\right)  ^{\beta}$, with $d,\beta\in%
\mathbb{R}
$. It is known that whenever $\beta>-1$, we can compute expressions for the
Riemann-Liouville fractional integral (RLFI) and Riemann-Liouville fractional
derivative (RLFD) of these power functions \cite{Kilbas}:%
\begin{align}
\left[  _{d}J_{t}^{\alpha}\left(  x-d\right)  ^{\beta}\right]  (t)  &
=\frac{\Gamma\left(  \beta+1\right)  }{\Gamma\left(  \beta+\alpha+1\right)
}\left(  t-d\right)  ^{\beta+\alpha},\label{Ipower}\\
\left[  _{d}D_{t}^{\alpha}\left(  x-d\right)  ^{\beta}\right]  (t)  &
=\frac{\Gamma\left(  \beta+1\right)  }{\Gamma\left(  \beta-\alpha+1\right)
}\left(  t-d\right)  ^{\beta-\alpha}, \label{Dpower}%
\end{align}
for $\operatorname{Re}\left(  \alpha\right)  \geq0$.

The proofs for the expressions in Eq.(\ref{Ipower}) and Eq.(\ref{Dpower}) are
very straightforward and can be found, e.g. in \cite{Kai} and they make use of
the integral representation for the Euler's beta function%
\begin{align}
B(\eta,\xi)  &  =\int_{0}^{1}x^{\eta-1}\left(  1-x\right)  ^{\xi
-1}dx,\label{BetaA}\\
&  =\frac{\Gamma\left(  \eta\right)  \Gamma\left(  \xi\right)  }{\Gamma\left(
\eta+\xi\right)  },\text{ }\operatorname{Re}[\eta],\operatorname{Re}[\xi]>0.
\label{BetaB}%
\end{align}

But here, we are particularly interested in evaluating possible expressions
for the RLFI and the RLFD of order $\alpha\in%
\mathbb{R}
$ for the power functions stated above without restricting the value of the
index $\beta\in%
\mathbb{R}
$, just as it happens when we're dealing with the integer order calculus.
Obviously, as one might expect, we won't be able to use the same direct kind
of Euler's beta function approach, because of the restriction of the integral
representation of the Euler's beta function to the positive half
complex-plane. We also point out that while the standard literature provide
expressions for the RLFI and RLFD with the lower limit $a$ of the operators
coinciding with the shifting factor $d$ in the argument of the power function
(i.e., when $a=d$ we call this situation, \textquotedblleft
centered\textquotedblright), we are also concerned with a more general setting
when $a$ and $d$ are not necessarily equal (in such cases, we call this
situation \textquotedblleft displaced\textquotedblright). This is particularly
important, because such \textquotedblleft displaced\textquotedblright%
\ expressions frequently occurs when trying to solve \emph{fractional delay
differential equations} \cite{Morgado}.

So in this work, we show that when one takes careful consideration on the
choices for the lower and upper limits of these operations, it is possible to
compute expressions for the RLFI and RLFD of any power function (regardless of
the index of the power) in terms of series that can be related to the famous
hypergeometric functions \cite{Lavoie} so important and commonly found in many
problems of the Mathematical-Physics. It's worth mentioning that in
\cite{Andriam} the authors have provided two alternative definitions for
fractional derivatives of power functions of any order, but they approached
the problem in a very distinct way as they have not used the Riemann-Liouville
formulation. Also, while there are many distinct formulations for a fractional
differential operator \cite{ecotenreiro, caputofabrizio, Khalil, Ortigueira},
it is our hope that with this work we can, not only provide helpful
expressions for the aforementioned calculations of RLFI and RLFD of power
functions to be used on analytical or numerical related problems, but also set
some ground for a future discussion on the theoretical aspects of computing a
\emph{fractional definite integral }versus knowing its \emph{fractional
primitives} whenever such definitions are meaningful.

We recall some basic concepts in the preliminary section, in the second and
third ones we present the main results stated as theorems, we then provide a
summary of the results for convenience compiling the main formulas obtained.
Finally we draw our conclusions and expose some further topics for researches.
This work also contain an appendix where it is shown some calculations where
we point out that our expressions can be identified with some hypergeometric functions.

\section{Preliminaries}

We recall that the RLFD is defined in terms of the RLFI which is, by the way,
defined in terms of a \emph{\textquotedblleft ordinary\textquotedblright
definite integral}, (see Def.\ref{Def1} and Def.\ref{Def2}). So by
construction, not only this makes the operator non-local, but also address the
matter of fractional integrodifferentiability of a function to be dependant on
the (upper and lower) limits of integration and the domain in which we want to
operate, just as it happens in the classical integer order theory and we know
that we can calculate the (classic integer order) \emph{indefinite integral}
of power functions $f(t)=\left(  t-d\right)  ^{\beta}$ regardless the values
of $\beta\in%
\mathbb{R}
$. This means that any power function has a \textquotedblleft
classical\textquotedblright\ primitive and, as a consequence, one can
calculate the $n$-fold (indefinite) integral (or $n$-fold primitive) of any
order $n\in%
\mathbb{N}
$. The situation is similar when calculating the $nth$-order derivatives.
Since the domain of the resulting power functions after the operations does
eventually change and depend on the index $\beta$ as well as the order $n$ of
the operators, then being (or not) able to calculate the \emph{definite
integrals} of these functions actually depends on \emph{where} we perform the
operations. After all in order to concretely calculate any \textquotedblleft
definite\textquotedblright\ results we must use the expressions for the
functions obtained by the action of the formal operators: $n$-fold
\emph{indefinite integrals} and the $nth$-order derivatives. So, even though
we can perform these integer order operators on power functions of any order,
it is not necessarily true that one can calculate any \emph{definite
integrals} or \emph{derivative at a point} for these functions at any
arbitrary interval of $%
\mathbb{R}
$. And we point out that the situation for the fractional case should be similar.

\begin{definition}
\label{Def1}Let $\Omega=\left[  a,b\right]  \subset%
\mathbb{R}
$, $f\in L_{1}\left(  \Omega\right)  $ and $\alpha\in%
\mathbb{R}
_{+}$. The expression for $\left[  _{a}J_{t}^{\alpha}f\left(  x\right)
\right]  \left(  t\right)  $,%
\begin{equation}
\left[  _{a}J_{t}^{\alpha}f\left(  x\right)  \right]  \left(  t\right)
=\frac{1}{\Gamma\left(  \alpha\right)  }\int_{a}^{t}\left(  t-x\right)
^{\alpha-1}f(x)dx, \label{RLFI}%
\end{equation}
defines \emph{(}leftwise\emph{)} the Riemann-Liouville fractional integral of
order $\alpha$ of the function $f$.
\end{definition}

\begin{definition}
\label{Def2}Let $\Omega=\left[  a,b\right]  \subset%
\mathbb{R}
$, $\mathbf{n}=\left[  \alpha\right]  +1$ where $\left[  \alpha\right]  $ is
the integer part of $\alpha\in%
\mathbb{R}
_{+}$ and $f\in AC^{\mathbf{n}}\left(  \Omega\right)  $. The expression for
$\left[  _{a}D_{t}^{\alpha}f\left(  x\right)  \right]  \left(  t\right)  $,%
\begin{align}
\left[  _{a}D_{t}^{\alpha}f(x)\right]  (t)  &  =\frac{d^{\mathbf{n}}%
}{dt^{\mathbf{n}}}\left[  _{a}J_{t}^{\mathbf{n}-\alpha}f(x)\right]
(t)\nonumber\\
&  =\frac{d^{\mathbf{n}}}{dt^{\mathbf{n}}}\frac{1}{\Gamma\left(
\mathbf{n}-\alpha\right)  }\int_{a}^{t}\left(  t-x\right)  ^{\mathbf{n}%
-\alpha-1}f(x)dx \label{RLFD}%
\end{align}
defines \emph{(}leftwise\emph{)} the Riemann-Liouville fractional derivative
of order $\alpha$ of the function $f$.
\end{definition}

One can similarly define the (rightwise) versions of the RLFI and RLFD
\cite{Podlubny, Rodrigues}.

In the definitions for the RLFI (Eq.(\ref{RLFI})) and the RLFD (Eq.(\ref{RLFD}%
)), if $\alpha=0$, we define both operators to be the identity operator
$\mathbf{I}$, while if one chooses $\alpha=n\in%
\mathbb{N}
$ both operators reduces, respectively, to their integer order counterparts,
that is, as $\alpha\rightarrow n$, $\left[  _{a}J_{t}^{\alpha}f\left(
x\right)  \right]  \left(  t\right)  $ $\rightarrow\left[  _{a}J_{t}%
^{n}f\left(  x\right)  \right]  \left(  t\right)  $ the $n$-fold integral and
$\left[  _{a}D_{t}^{\alpha}f(x)\right]  (t)\rightarrow\left[  _{a}D_{t}%
^{n}f(x)\right]  (t)=\frac{d^{n}}{dt^{n}}f(t)$ the $nth$-order derivative
\cite{Kai, Oldham}.

In both definitions, the $\Gamma\left(  \ast\right)  $ symbol refers to the
gamma function and we will be using some of its properties related to the
ascending and descending Pochhammer symbols defined, respectively, as:%
\begin{equation}
\left(  z\right)  _{k}=\left\{
\begin{array}
[c]{lc}%
1, & \text{if }k=0,\\
z(z+1)\cdots(z+k-1), & \text{if }k\in%
\mathbb{N}
.
\end{array}
\right.  \label{PocA}%
\end{equation}%
\begin{equation}
\left(  z\right)  _{-k}=\left\{
\begin{array}
[c]{lc}%
1, & \text{if }k=0,\\
z(z-1)\cdots(z-k+1), & \text{if }k\in%
\mathbb{N}
,
\end{array}
\right.  \label{PocD}%
\end{equation}
which can be rewritten in the following form%
\begin{align}
\left(  z\right)  _{k}  &  =\frac{\Gamma\left(  z+k\right)  }{\Gamma\left(
z\right)  },\label{IdA}\\
\left(  z\right)  _{-k}  &  =\frac{\Gamma\left(  z+1\right)  }{\Gamma\left(
z-k+1\right)  }, \label{IdB}%
\end{align}
and are valid the relations below%

\begin{align}
\left(  -z\right)  _{k}  &  =\left(  -1\right)  ^{k}\left(  z\right)
_{-k},\label{IdAA}\\
\left(  -z\right)  _{-k}  &  =\left(  -1\right)  ^{k}\left(  z\right)  _{k}.
\label{IdBB}%
\end{align}

We also recall that the gamma function is uniquely determined as the function
satisfying the functional relation%
\begin{align}
\Gamma\left(  1\right)   &  =1,\label{Func}\\
z\Gamma\left(  z\right)   &  =\Gamma\left(  z+1\right)  ,\text{ }%
\operatorname{Re}\left(  z\right)  >0, \label{Func1}%
\end{align}
but the relation on Eq.(\ref{Func1}) can be used to extend it analytically to
all complex values, except on $%
\mathbb{N}
_{0}=\left\{  0,-1,-2,\ldots\right\}  $, where $\Gamma\left(  -n\right)
\rightarrow\pm\infty$, $n\in%
\mathbb{N}
$. Yet, the relation on Eq.(\ref{Func1}) is valid for all complex values and
when dealing with the elements of $%
\mathbb{N}
_{0}$ one should consider%
\begin{equation}
z^{\pm}\Gamma\left(  z^{\pm}\right)  =\Gamma\left(  z^{\pm}+1\right)  .
\label{Func2}%
\end{equation}

Although the gamma function is not defined for negative integers, the ratio of
gamma functions of negative integers are defined \cite{Oldham}%
\begin{equation}
\frac{\Gamma\left(  -n\right)  }{\Gamma\left(  -m\right)  }=\left(  -1\right)
^{m-n}\frac{m!}{n!},\text{ }m,n\in%
\mathbb{N}
, \label{IdC}%
\end{equation}
\newline and we point out that Eq.(\ref{IdC}) is also valid when choosing $m$
or $n$ to be zero, with%
\begin{align}
\frac{\Gamma\left(  0\right)  }{\Gamma\left(  -m\right)  }  &  =\left(
-1\right)  ^{m}\frac{m!}{0!}=\left(  -1\right)  ^{m}m!,\label{A}\\
\frac{\Gamma\left(  -n\right)  }{\Gamma\left(  0\right)  }  &  =\left(
-1\right)  ^{-n}\frac{0!}{n!}=\frac{\left(  -1\right)  ^{n}}{n!},\label{B}\\
\frac{\Gamma\left(  0\right)  }{\Gamma\left(  0\right)  }  &  =1. \label{C}%
\end{align}

Finally, due to the definition and properties of the (analytically extended)
gamma function and its relation with the Pochhammer symbols above, we are
allowed to generalize the binomial coefficients to non integer values%
\begin{equation}
\left(
\begin{array}
[c]{c}%
\beta\\
k
\end{array}
\right)  =\frac{\Gamma\left(  \beta+1\right)  }{\Gamma\left(  \beta
-k+1\right)  \Gamma\left(  k+1\right)  },\text{ }\beta\in%
\mathbb{R}
\text{ and }k\in%
\mathbb{N}
_{0}\text{,} \label{Bin}%
\end{equation}
where we denote $%
\mathbb{N}
_{0}=%
\mathbb{N}
\cup\left\{  0\right\}  $.

In this work, we will restrict ourselves to $0<\alpha<1$. This will simplify
the main analysis without loosing great generality, since for any arbitrary
order $\alpha\in%
\mathbb{R}
$, we have $\alpha=\left[  \alpha\right]  +\left\{  \alpha\right\}  $, where
$\left[  \alpha\right]  $ is the integer part of $\alpha$ and $0<\left\{
\alpha\right\}  <1$ its fractional part and the main feature of FC is related
exactly to the non-integer part. Hence we are considering the evaluation of
these expressions:%

\begin{align}
\left[  _{a}J_{t}^{\alpha}\left(  x-d\right)  ^{\beta}\right]  (t)  &
=\frac{1}{\Gamma\left(  \alpha\right)  }\int_{a}^{t}\left(  t-x\right)
^{\alpha-1}(x-d)^{\beta}dx\label{1}\\
\left[  _{a}D_{t}^{\alpha}\left(  x-d\right)  ^{\beta}\right]  (t)  &
=\frac{d}{dt}\left[  _{a}J_{t}^{1-\alpha}\left(  x-d\right)  ^{\beta}\right]
(t)\nonumber\\
&  =\frac{d}{dt}\frac{1}{\Gamma\left(  1-\alpha\right)  }\int_{a}^{t}\left(
t-x\right)  ^{-\alpha}(x-d)^{\beta}dx. \label{2}%
\end{align}

But before we begin with the explicit calculations, lets call attention to the
domain in $%
\mathbb{R}
$ of $f(t)=\left(  t-d\right)  ^{\beta}$, which essentially depend on the
values of $d$ and $\beta$ as it plays an important role on (definite)
integrability and differentiability (at a point).

We have the following possibilities:

\textbf{Case 1: }%
\begin{equation}
Dom(f)=\left\{
\begin{array}
[c]{l}%
\mathbb{R}
,\text{ if }\beta\in%
\mathbb{N}
_{0},\\%
\mathbb{R}
\setminus\{d\},\text{ if }\beta\in%
\mathbb{Z}
\setminus%
\mathbb{N}
_{0}.
\end{array}
\right.  \label{3}%
\end{equation}

\textbf{Case 2: }If $\beta\in%
\mathbb{Q}
\setminus%
\mathbb{Z}
$, that is, if $\beta$ is a proper rational fraction, then we can assume
without loss of generality that $\beta=\frac{p}{q}$ with $p\in%
\mathbb{Z}
$ and $q\in%
\mathbb{N}
$ and under this hypothesis, we know that%
\begin{equation}
f(t)=\left(  t-d\right)  ^{\beta}=\sqrt[q]{\left(  t-d\right)  ^{p}},
\label{4}%
\end{equation}
and since the operation $\sqrt[q]{\left(  \ast\right)  }$, $q\in%
\mathbb{N}
$ is well defined in $%
\mathbb{R}
_{+}^{0}=\left[  0,+\infty\right)  $, we conclude that:%
\begin{equation}
Dom(f)=\left\{
\begin{array}
[c]{l}%
\mathbb{R}
,\text{ if }p\ \text{is an even positive integer;}\\
\left[  d,+\infty\right)  ,\text{ if }p\text{ is an odd positive integer;}\\
\left(  d,+\infty\right)  ,\text{ if }p\text{ is a negative integer (even or
odd).}%
\end{array}
\right.  \label{5}%
\end{equation}

\textbf{Case 3:} Now if $\beta\in%
\mathbb{R}
\setminus%
\mathbb{Q}
$, that is, if $\beta$ is irrational, then we can use the following identity:%
\begin{equation}
\left(  t-d\right)  ^{\beta}=e^{\ln\left(  t-d\right)  ^{\beta}}=e^{\beta
\ln\left(  t-d\right)  }, \label{6}%
\end{equation}
and since the domain in $%
\mathbb{R}
$ of the logarithm is $%
\mathbb{R}
_{+}=\left(  0,+\infty\right)  $ while the domain in $%
\mathbb{R}
$ of the exponential is $%
\mathbb{R}
$ itself, then we conclude that in such case $Dom(f)=\left(  d,+\infty\right)
$.

\section{Riemann-Liouville Integration of Power Functions}

We start with the following theorem.

\begin{theorem}
\label{Th1}Let $f(t)=\left(  t-d\right)  ^{\beta}$, $d,\beta\in%
\mathbb{R}
$ and suppose $t\in\Omega\subset%
\mathbb{R}
$, where $\Omega$ is an interval where $f$ is properly defined as real valued
function. Then\newline\emph{(I)}
\begin{equation}
\left[  _{a}J_{t}^{\alpha}\left(  x-d\right)  ^{\beta}\right]  (t)=\sum
_{k=0}^{\infty}\frac{\Gamma\left(  \beta+1\right)  \left(  a-d\right)
^{\beta-k}\left(  t-a\right)  ^{\alpha+k}}{\Gamma\left(  \beta-k+1\right)
\Gamma\left(  \alpha+k+1\right)  },\text{ }t\in\left[  a,a+\frac{\epsilon}%
{2}\right)  , \label{Ia}%
\end{equation}
\newline\emph{(II)}
\begin{equation}
\left[  _{d^{+}}J_{t}^{\alpha}\left(  x-d\right)  ^{\beta}\right]
(t)=\sum_{k=0}^{\mathbf{\infty}}\frac{\Gamma\left(  \beta+1\right)
\epsilon^{\beta-k}\left(  t-d^{+}\right)  ^{\alpha+k}}{\Gamma\left(
\beta-k+1\right)  \Gamma\left(  \alpha+k+1\right)  },\text{ }t\in\left[
d^{+},d^{+}+\epsilon\right)  . \label{Ib}%
\end{equation}
\newline Particularly, if $\beta=m\in%
\mathbb{N}
_{0}$, then%
\begin{equation}
\left[  _{a}J_{t}^{\alpha}\left(  x-d\right)  ^{m}\right]  (t)=%
{\displaystyle\sum\limits_{k=0}^{m}}
\frac{\Gamma\left(  m+1\right)  \left(  a-d\right)  ^{m-k}\left(  t-a\right)
^{\alpha+k}}{\Gamma\left(  m-k+1\right)  \Gamma\left(  \alpha+k+1\right)
},\text{ }a,t\in%
\mathbb{R}
, \label{Ic}%
\end{equation}
while if $\beta=-m$ \ with $m\in%
\mathbb{N}
$, then we can use alternatively the following expressions as well
\begin{align}
\left[  _{a}J_{t}^{\alpha}\left(  x-d\right)  ^{-m}\right]  (t)  &
=\sum_{k=0}^{\infty}\frac{\left(  -1\right)  ^{k}\Gamma\left(  m+k\right)
\left(  a-d\right)  ^{-\left(  m+k\right)  }\left(  t-a\right)  ^{\alpha+k}%
}{\Gamma\left(  m\right)  \Gamma\left(  \alpha+k+1\right)  },\label{Id}\\
\left[  _{d^{+}}J_{t}^{\alpha}\left(  x-d\right)  ^{-m}\right]  (t)  &
=\sum_{k=0}^{\mathbf{\infty}}\frac{\left(  -1\right)  ^{k}\Gamma\left(
m+k\right)  \epsilon^{-\left(  m+k\right)  }\left(  t-d^{+}\right)
^{\alpha+k}}{\Gamma\left(  m\right)  \Gamma\left(  \alpha+k+1\right)  },
\label{Ie}%
\end{align}
with $t\in\left[  a,a+\frac{\epsilon}{2}\right)  $ and $t\in\left[
d^{+},d^{+}+\epsilon\right)  $, respectively and where in all cases,
$\epsilon=\left\vert d-a\right\vert >0.$
\end{theorem}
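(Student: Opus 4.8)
The plan is to avoid the beta-function integral altogether (which is precisely what forces $\beta>-1$) and instead expand the power $(x-d)^{\beta}$ as a convergent series in $(x-a)$, thereby reducing the computation to the already-known \emph{centered} formula Eq.~(\ref{Ipower}) applied to nonnegative integer powers. First I would write $x-d=(a-d)+(x-a)$ and invoke the generalized binomial theorem,
\begin{equation}
(x-d)^{\beta}=(a-d)^{\beta}\left(1+\frac{x-a}{a-d}\right)^{\beta}=\sum_{k=0}^{\infty}\binom{\beta}{k}(a-d)^{\beta-k}(x-a)^{k},\nonumber
\end{equation}
which converges whenever $|x-a|<|a-d|=\epsilon$. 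The restriction to $t\in[a,a+\tfrac{\epsilon}{2})$ guarantees that for every $x$ in the integration range $[a,t]$ one has $|x-a|\le|t-a|<\epsilon$, so the expansion is valid on the whole interval and is uniformly convergent there; this is exactly the device that keeps the singular point $x=d$ outside the range of integration.

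Next I would substitute this series into the defining integral Eq.~(\ref{1}) and interchange summation and integration. Once that is justified, each summand reduces to the centered integral $\tfrac{1}{\Gamma(\alpha)}\int_a^t (t-x)^{\alpha-1}(x-a)^k\,dx$, whose value is supplied directly by Eq.~(\ref{Ipower}) with shift $d\to a$ and exponent $\beta\to k\ge0$, namely $\tfrac{\Gamma(k+1)}{\Gamma(\alpha+k+1)}(t-a)^{\alpha+k}$. Collecting constants and using $\binom{\beta}{k}\Gamma(k+1)=\Gamma(\beta+1)/\Gamma(\beta-k+1)$ from Eq.~(\ref{Bin}) produces precisely Eq.~(\ref{Ia}). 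Part (II), Eq.~(\ref{Ib}), is then merely the relabeling $a=d^{+}$, for which $a-d=\epsilon$ and $(a-d)^{\beta-k}=\epsilon^{\beta-k}$.

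For the two specializations I would argue as follows. When $\beta=m\in\mathbb{N}_0$ the factor $1/\Gamma(\beta-k+1)=1/\Gamma(m-k+1)$ vanishes for every $k>m$, so the series in Eq.~(\ref{Ia}) truncates to the finite sum Eq.~(\ref{Ic}); being a genuine polynomial identity, it then holds for all $a,t\in\mathbb{R}$ with no convergence restriction. When $\beta=-m$ with $m\in\mathbb{N}$, I would read the ratio of gamma functions as a descending Pochhammer symbol and apply the identity Eq.~(\ref{IdBB}), $(-m)_{-k}=(-1)^k(m)_k=(-1)^k\Gamma(m+k)/\Gamma(m)$, to convert $\Gamma(\beta+1)/\Gamma(\beta-k+1)$ into $(-1)^k\Gamma(m+k)/\Gamma(m)$; substituting this into Eq.~(\ref{Ia}) (and into its $d^{+}$ form) yields Eqs.~(\ref{Id}) and~(\ref{Ie}).

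The main obstacle is the legitimacy of the term-by-term integration. The subtlety is that the weight $(t-x)^{\alpha-1}$ is itself singular at the upper endpoint $x=t$ (since $0<\alpha<1$), so one cannot simply appeal to uniform convergence of a bounded integrand. The clean way around this is a Weierstrass $M$-test estimate: on $[a,t]$ with $|t-a|<\epsilon$ the binomial series is dominated by a convergent numerical series independent of $x$, while $(t-x)^{\alpha-1}$ is absolutely integrable on $[a,t]$; dominated convergence (equivalently, uniform convergence of the partial sums weighted against the integrable kernel) then licenses the interchange. Verifying this bound, and confirming that the resulting series of evaluated terms indeed converges on $[a,a+\tfrac{\epsilon}{2})$, is the only genuinely analytic point; everything else is the bookkeeping of the gamma-function and Pochhammer identities already recorded in the Preliminaries.
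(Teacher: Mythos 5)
Your proposal is correct, but it takes a genuinely different route from the paper's own proof of Theorem \ref{Th1}. The paper proceeds by integrating by parts $\mathbf{p}$ times, obtaining the partial sum plus an explicit remainder $\mathcal{R}_{\mathbf{p}}$ (Eqs.~(\ref{IP1})--(\ref{IP2a})), and then kills the remainder through a rather delicate analysis: the reflection identity for $\Gamma(z-n)$, the Tricomi--Erd\'elyi asymptotics for the ratio $\Gamma(\mathbf{p}-\beta-1)/\Gamma(\alpha+\mathbf{p})$, and the fact that the geometric factor $\left\vert (t-a)/(t-d)\right\vert^{\mathbf{p}}$ decays exponentially and so dominates the polynomial growth $\mathbf{p}^{-\alpha-\beta-1}$; the cases $\beta=-m$ and $\beta=-1$ are then treated separately. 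You instead expand $(x-d)^{\beta}$ in its binomial (Taylor) series about the lower limit, integrate term by term against the integrable kernel $(t-x)^{\alpha-1}$, and reduce each summand to the centered formula Eq.~(\ref{Ipower}) with nonnegative integer exponent. This is essentially the content of the paper's own Corollary \ref{Cor}, which the authors state as a \emph{consequence} of the theorem; you invert that logical order and use it as the proof. Your justification of the interchange (uniform convergence of the partial sums on $[a,t]$ paired with absolute integrability of the singular kernel) is exactly the right analytic point and is sound. What your route buys: it is shorter, it handles $\beta=-1$ and $\beta=-m$ uniformly without the separate logarithm discussion, and it naturally gives convergence on the full interval $[a,a+\epsilon)$ rather than only $[a,a+\tfrac{\epsilon}{2})$ in case (I) (the $\tfrac{\epsilon}{2}$ in the paper is an artifact of bounding $\left\vert(t-a)/(t-d)\right\vert$ by $1$, which forces $t<\tfrac{a+d}{2}$ when $a<d$). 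What the paper's route buys: explicit finite partial sums with a closed-form remainder, hence truncation-error control, and independence from the prior formula Eq.~(\ref{Ipower}). One small point you should make explicit: when $a<d$ and $\beta$ is a non-integer for which $(x-d)^{\beta}$ is still real (e.g.\ rational with odd denominator), the factors $(a-d)^{\beta-k}$ and $(1+\tfrac{x-a}{a-d})^{\beta}$ must be interpreted consistently with the real branch of the power; this is the same bookkeeping the paper itself leaves implicit, so it is not a gap, but it deserves a sentence.
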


\begin{proof}
Initially, let $\beta\in%
\mathbb{R}
\setminus%
\mathbb{Z}
$. Using the definition of the RLFI (Eq.(\ref{RLFI})) and integrating by parts
a total of $\mathbf{p}$ times, we get%
\begin{equation}
\left[  _{a}J_{t}^{\alpha}\left(  x-d\right)  ^{\beta}\right]  (t)=\sum
_{k=0}^{\mathbf{p}-1}\frac{\left(  \beta\right)  _{-k}\left(  a-d\right)
^{\beta-k}\left(  t-a\right)  ^{\alpha+k}}{\Gamma\left(  \alpha+k+1\right)
}+\mathcal{R}_{\mathbf{p}}, \label{IP1}%
\end{equation}
where%
\begin{equation}
\mathcal{R}_{\mathbf{p}}=\frac{\left(  \beta\right)  _{-\mathbf{p}}}%
{\Gamma\left(  \alpha+\mathbf{p}\right)  }\int_{a}^{t}\left(  x-d\right)
^{\beta-\mathbf{p}}\left(  t-x\right)  ^{\alpha+\mathbf{p}-1}dx. \label{IP2}%
\end{equation}
\newline Now, using the identity in Eq.(\ref{IdB}), we can write%
\begin{equation}
\left[  _{a}J_{t}^{\alpha}\left(  x-d\right)  ^{\beta}\right]  (t)=\sum
_{k=0}^{\mathbf{p}-1}\frac{\Gamma\left(  \beta+1\right)  \left(  a-d\right)
^{\beta-k}\left(  t-a\right)  ^{\alpha+k}}{\Gamma\left(  \beta-k+1\right)
\Gamma\left(  \alpha+k+1\right)  }+\mathcal{R}_{\mathbf{p}}, \label{IP1a}%
\end{equation}
with
\begin{equation}
\mathcal{R}_{\mathbf{p}}=\frac{\Gamma\left(  \beta+1\right)  }{\Gamma\left(
\beta-\mathbf{p}+1\right)  \Gamma\left(  \alpha+\mathbf{p}\right)  }\int%
_{a}^{t}\left(  x-d\right)  ^{\beta-\mathbf{p}}\left(  t-x\right)
^{\alpha+\mathbf{p}-1}dx. \label{IP2a}%
\end{equation}
\newline We now estimate the remainder $\mathcal{R}_{\mathbf{p}}$%
\begin{align}
\left\vert \mathcal{R}_{\mathbf{p}}\right\vert  &  \leq\left\vert \frac
{\Gamma\left(  \beta+1\right)  }{\Gamma\left(  \beta-\mathbf{p}+1\right)
\Gamma\left(  \alpha+\mathbf{p}\right)  }\right\vert \int_{a}^{t}\left\vert
x-d\right\vert ^{\beta-\mathbf{p}}\left\vert t-x\right\vert ^{\alpha
+\mathbf{p}-1}dx\nonumber\\
&  \leq\left\vert \frac{\Gamma\left(  \beta+1\right)  }{\Gamma\left(
\beta-\mathbf{p}+1\right)  \Gamma\left(  \alpha+\mathbf{p}\right)
}\right\vert \left\vert t-a\right\vert ^{\alpha+\mathbf{p}-1}\int_{a}%
^{t}\left\vert x-d\right\vert ^{\beta-\mathbf{p}}dx\nonumber\\
&  =\left\vert \frac{\Gamma\left(  \beta+1\right)  }{\Gamma\left(
\beta-\mathbf{p}+1\right)  \Gamma\left(  \alpha+\mathbf{p}\right)
}\right\vert \left\vert t-a\right\vert ^{\alpha+\mathbf{p}-1}\left\{
\frac{\left\vert t-d\right\vert ^{\beta-\mathbf{p}+1}-\left\vert
a-d\right\vert ^{\beta-\mathbf{p}+1}}{\beta-\mathbf{p}+1}\right\} \nonumber\\
&  =\tfrac{\Gamma\left(  \beta+1\right)  }{\Gamma\left(  \beta-\mathbf{p}%
+2\right)  \Gamma\left(  \alpha+\mathbf{p}\right)  }\left\{  \frac{\left\vert
t-a\right\vert ^{\alpha}}{\left\vert t-d\right\vert ^{-\beta}}\left\vert
\frac{t-a}{t-d}\right\vert ^{\mathbf{p}-1}-\frac{\left\vert t-a\right\vert
^{\alpha}}{\left\vert a-d\right\vert ^{-\beta}}\left\vert \frac{t-a}%
{a-d}\right\vert ^{\mathbf{p}-1}\right\}  . \label{IP3}%
\end{align}
\newline While for the second factor in Eq.(\ref{IP3}) it is straightforward
that
\begin{equation}
\lim_{\mathbf{p}\rightarrow\infty}\left\{  \frac{\left\vert t-a\right\vert
^{\alpha}}{\left\vert t-d\right\vert ^{-\beta}}\left\vert \frac{t-a}%
{t-d}\right\vert ^{\mathbf{p}-1}-\frac{\left\vert t-a\right\vert ^{\alpha}%
}{\left\vert a-d\right\vert ^{-\beta}}\left\vert \frac{t-a}{a-d}\right\vert
^{\mathbf{p}-1}\right\}  =0, \label{IP4}%
\end{equation}
whenever $t\in\left[  a,a+\frac{\left\vert a-d\right\vert }{2}\right)  $ if
$a\leq t<d$ or $t\in\left[  a,a+\left\vert a-d\right\vert \right)  $ if
$d<a\leq t$, for the first factor, we need some further analysis. Using the
well known identity \cite{Magnus}%
\begin{equation}
\Gamma\left(  z-n\right)  =\frac{\left(  -1\right)  ^{n}\pi}{\sin(\pi
z)\Gamma\left(  n+1-z\right)  }, \label{IP5}%
\end{equation}
we can write%
\begin{align}
\left\vert \frac{\Gamma\left(  \beta+1\right)  }{\Gamma\left(  \beta
-\mathbf{p}+2\right)  \Gamma\left(  \alpha+\mathbf{p}\right)  }\right\vert  &
=\left\vert \frac{\Gamma\left(  \beta+1\right)  \sin[\pi(\beta+2)]\Gamma
\left(  \mathbf{p}-\beta-1\right)  }{\left(  -1\right)  ^{p}\pi\Gamma\left(
\alpha+\mathbf{p}\right)  }\right\vert \nonumber\\
&  \leq\left\vert \frac{\Gamma\left(  \beta+1\right)  \sin[\pi(\beta
+2)]}{\left(  -1\right)  ^{p}\pi}\right\vert \left\vert \frac{\Gamma\left(
\mathbf{p}-\beta-1\right)  }{\Gamma\left(  \alpha+\mathbf{p}\right)
}\right\vert \nonumber\\
&  =\left\vert \frac{\Gamma\left(  \beta+1\right)  \sin(\pi\beta)}{\pi
}\right\vert \left\vert \frac{\Gamma\left(  \mathbf{p}-\beta-1\right)
}{\Gamma\left(  \alpha+\mathbf{p}\right)  }\right\vert . \label{IP6}%
\end{align}
\ For our choice of $\beta$, it is secured that $\left\vert \frac
{\Gamma\left(  \beta+1\right)  \sin(\pi\beta)}{\pi}\right\vert =M$ is always
finite regardless of $\mathbf{p}$, while from \cite{Tricomi} we have the
asymptotic behavior of a ratio of gamma functions%
\begin{equation}
\lim_{\mathbf{p}\rightarrow\infty}\left\vert \frac{\Gamma\left(
\mathbf{p}-\beta-1\right)  }{\Gamma\left(  \alpha+\mathbf{p}\right)
}\right\vert =\mathbf{p}^{-\alpha-\beta-1}, \label{IP7}%
\end{equation}
with convergence depending on $-\alpha-1\leq\beta$. However, we are actually
interested on the behavior of the product of the terms in Eq.(\ref{IP4}) and
Eq.(\ref{IP7}). Since%
\begin{align}
\left\vert \frac{t-a}{t-d}\right\vert ^{\mathbf{p}-1}  &  =e^{-\gamma
\mathbf{p}+\gamma},\label{IP8a}\\
\left\vert \frac{t-a}{a-d}\right\vert ^{\mathbf{p}-1}  &  =e^{-\eta
\mathbf{p}+\eta}, \label{IP8b}%
\end{align}
with $0<\gamma=-\ln\left\vert \frac{t-a}{t-d}\right\vert $ and $0<\eta
=-\ln\left\vert \frac{t-a}{a-d}\right\vert $ (for the proper neighborhood as
describe above) and the exponential decay (or growth) rate is always stronger
(in the limit) than any power like rate of growth (decay), that is%
\begin{equation}
\lim_{\mathbf{p}\rightarrow\infty}\frac{\mathbf{p}^{a}}{e^{\mathbf{p}}%
}=0,\text{ }\forall a\in%
\mathbb{R}
, \label{IP9}%
\end{equation}
we can conclude that%
\begin{equation}
\lim_{\mathbf{p}\rightarrow\infty}\left\vert \mathcal{R}_{\mathbf{p}%
}\right\vert \leq M\lim_{\mathbf{p}\rightarrow\infty}\mathbf{p}^{-\alpha
-\beta-1}\left\{  e^{-\gamma\mathbf{p}+\gamma}-e^{-\eta\mathbf{p}+\eta
}\right\}  =0, \label{IP10}%
\end{equation}
proving the results in Eq.(\ref{Ia}) and Eq.(\ref{Ib}) for $\beta\in%
\mathbb{R}
\setminus%
\mathbb{Z}
$.

We now investigate the case $\beta=-m$, $m\in%
\mathbb{N}
$. Initially, we will exclude the case $m=1$, since they lead to logarithms.
Recall that for $\beta=-2,-3,...$ the domain of $f$ is
\[
Dom(f)=%
\mathbb{R}
\setminus\{d\}=\left(  -\infty,d\right)  \cup\left(  d,\infty\right)  .
\]
Therefore, when calculating $\left[  _{a}J_{t}^{\alpha}\left(  x-d\right)
^{-m}\right]  (t)$, we need to take care of choosing the lower limit of
integration $a$ in one of the following intervals: (I) $\left(  -\infty
,d\right)  $ or (II) $\left(  d,\infty\right)  $.\newline(I) So let $a\leq
t<d$. Then,%
\begin{equation}
\left[  _{a}J_{t}^{\alpha}\left(  x-d\right)  ^{-m}\right]  (t)=\frac
{1}{\Gamma\left(  \alpha\right)  }\int_{a}^{t}\left(  x-d\right)  ^{-m}\left(
t-x\right)  ^{\alpha-1}dx. \label{15}%
\end{equation}
If we integrate by parts the expression in the right side of Eq.(\ref{15}) a
total of $\mathbf{p}$ times we get the following result%
\begin{equation}
\left[  _{a}J_{t}^{\alpha}\left(  x-d\right)  ^{-m}\right]  (t)=\sum
_{k=0}^{\mathbf{p}-1}\frac{\left(  -1\right)  ^{k}\left(  m\right)
_{k}\left(  a-d\right)  ^{-\left(  m+k\right)  }\left(  t-a\right)
^{\alpha+k}}{\Gamma\left(  \alpha+k+1\right)  }+\mathcal{R}_{\mathbf{p}},
\label{16a}%
\end{equation}
where
\begin{equation}
\mathcal{R}_{\mathbf{p}}=\frac{\left(  -1\right)  ^{\mathbf{p}}\left(
m\right)  _{\mathbf{p}}}{\Gamma\left(  \alpha+\mathbf{p}\right)  }\int_{a}%
^{t}\left(  x-d\right)  ^{-\left(  m+\mathbf{p}\right)  }\left(  t-x\right)
^{\alpha+\mathbf{p}-1}dx. \label{16b}%
\end{equation}
\ Using the identity in Eq.(\ref{IdA}), the expressions in Eq.(\ref{16a}) and
Eq.(\ref{16b}) can be rewritten as%
\begin{equation}
\left[  _{a}J_{t}^{\alpha}\left(  x-d\right)  ^{-m}\right]  (t)=\sum
_{k=0}^{\mathbf{p}-1}\frac{\left(  -1\right)  ^{k}\Gamma\left(  m+k\right)
\left(  a-d\right)  ^{-\left(  m+k\right)  }\left(  t-a\right)  ^{\alpha+k}%
}{\Gamma\left(  m\right)  \Gamma\left(  \alpha+k+1\right)  }+\mathcal{R}%
_{\mathbf{p}}, \label{18a}%
\end{equation}
where%
\begin{equation}
\mathcal{R}_{\mathbf{p}}=\frac{\left(  -1\right)  ^{\mathbf{p}}\Gamma\left(
m+\mathbf{p}\right)  }{\Gamma\left(  m\right)  \Gamma\left(  \alpha
+\mathbf{p}\right)  }\int_{a}^{t}\left(  x-d\right)  ^{-\left(  m+\mathbf{p}%
\right)  }\left(  t-x\right)  ^{\alpha+\mathbf{p}-1}dx. \label{18b}%
\end{equation}
Observe that the remainder $\mathcal{R}_{\mathbf{p}}$ can be estimated by the
inequalities%
\begin{align}
\left\vert \mathcal{R}_{\mathbf{p}}\right\vert  &  \leq\left\vert
\frac{\left(  -1\right)  ^{\mathbf{p}}\Gamma\left(  m+\mathbf{p}\right)
}{\Gamma\left(  m\right)  \Gamma\left(  \alpha+\mathbf{p}\right)  }\right\vert
\int_{a}^{t}\left\vert \left(  x-d\right)  \right\vert ^{-\left(
m+\mathbf{p}\right)  }\left\vert \left(  t-x\right)  \right\vert
^{\alpha+\mathbf{p}-1}dx\nonumber\\
&  \leq\left\vert \frac{\Gamma\left(  m+\mathbf{p}\right)  }{\Gamma\left(
m\right)  \Gamma\left(  \alpha+\mathbf{p}\right)  }\right\vert \left\vert
t-a\right\vert ^{\alpha+\mathbf{p}-1}\int_{a}^{t}\left\vert \left(
x-d\right)  \right\vert ^{-\left(  m+\mathbf{p}\right)  }dx\nonumber\\
&  \leq\left\vert \frac{\Gamma\left(  m+\mathbf{p}\right)  }{\Gamma\left(
m\right)  \Gamma\left(  \alpha+\mathbf{p}\right)  }\right\vert \left\vert
t-a\right\vert ^{\alpha+\mathbf{p}-1}\int_{a}^{t}\left\vert \left(
t-d\right)  \right\vert ^{-\left(  m+\mathbf{p}\right)  }dx\nonumber\\
&  =\left\vert \frac{\Gamma\left(  m+\mathbf{p}\right)  }{\Gamma\left(
m\right)  \Gamma\left(  \alpha+\mathbf{p}\right)  }\right\vert \frac
{\left\vert t-a\right\vert ^{\alpha+\mathbf{p}}}{\left\vert t-d\right\vert
^{m+\mathbf{p}}}\nonumber\\
&  =\left\vert \frac{\Gamma\left(  m+\mathbf{p}\right)  }{\Gamma\left(
m\right)  \Gamma\left(  \alpha+\mathbf{p}\right)  }\right\vert \frac
{\left\vert t-a\right\vert ^{\alpha}}{\left\vert t-d\right\vert ^{m}%
}\left\vert \frac{t-a}{t-d}\right\vert ^{\mathbf{p}}. \label{19}%
\end{align}
Now, in the last equality of Eq.(\ref{19}), we have that for each fixed value
of $t$ satisfying $a\leq t<d$ the fraction $\frac{\left\vert t-a\right\vert
^{\alpha}}{\left\vert t-d\right\vert ^{m}}$ has a fixed finite value. While%
\begin{equation}
\lim_{\mathbf{p}\rightarrow\infty}\left\vert \frac{t-a}{t-d}\right\vert
^{\mathbf{p}}=0, \label{20a}%
\end{equation}
as long as $\left\vert \frac{t-a}{t-d}\right\vert <1$, which is guaranteed for
$t\in\left[  a,a+\frac{\epsilon}{2}\right)  $, $\epsilon=\left\vert
d-a\right\vert $. On the other hand, we have that \cite{Tricomi}%
\begin{equation}
\lim_{\mathbf{p}\rightarrow\infty}\left\vert \frac{\Gamma\left(
m+\mathbf{p}\right)  }{\Gamma\left(  m\right)  \Gamma\left(  \alpha
+\mathbf{p}\right)  }\right\vert =\frac{\mathbf{p}^{m-\alpha}}{\Gamma\left(
m\right)  }, \label{20b}%
\end{equation}
with convergence depending on $m-\alpha\leq0$. However, again we are basically
concerned with the limit of the product between Eq.(\ref{20a}) and
Eq.(\ref{20b}) and since we can identify%
\begin{equation}
\left\vert \frac{t-a}{t-d}\right\vert ^{\mathbf{p}}=e^{-\gamma\mathbf{p}},
\label{20c}%
\end{equation}
where $0<\gamma=-\ln\left\vert \frac{t-a}{t-d}\right\vert $ (for $t\in\left[
a,a+\frac{\epsilon}{2}\right)  $, $\epsilon=\left\vert d-a\right\vert $), and
the exponential decay rate is faster (in the limit) than any power like rate,
then we can conclude that%
\begin{equation}
\lim_{p\rightarrow\infty}\left\vert \mathcal{R}_{\mathbf{p}}\right\vert
\leq\frac{\left\vert t-a\right\vert ^{\alpha}}{\Gamma\left(  m\right)
\left\vert t-d\right\vert ^{m}}\lim_{p\rightarrow\infty}\frac{\mathbf{p}%
^{m-\alpha}}{e^{\gamma\mathbf{p}}}=0, \label{20d}%
\end{equation}
therefore, as $\mathbf{p}\rightarrow\infty$, we conclude that Eq.(\ref{18a})
reduces to%
\begin{equation}
\left[  _{a}J_{t}^{\alpha}\left(  x-d\right)  ^{-m}\right]  (t)=\sum
_{k=0}^{\infty}\frac{\left(  -1\right)  ^{k}\Gamma\left(  m+k\right)  \left(
a-d\right)  ^{-\left(  m+k\right)  }\left(  t-a\right)  ^{\alpha+k}}%
{\Gamma\left(  m\right)  \Gamma\left(  \alpha+k+1\right)  }, \label{21}%
\end{equation}
for $t\in\left[  a,a+\frac{\epsilon}{2}\right)  $, $\epsilon=\left\vert
d-a\right\vert $, giving the expression claimed in Eq.(\ref{Id}). A very
similar analysis but with some minor adjustment on the neighborhood would
prove Eq.(\ref{Ie}) as well.\newline It remains to explore the case where
$m=1$, that is when $\beta=-1$. For that, recall from integer order calculus
that integration of functions of the type $\left(  x-d\right)  ^{-1}$ lead to
logarithms. Specifically we have%
\begin{equation}
\int_{a}^{t}\left(  x-d\right)  ^{-1}dx=\ln\left(  \frac{t-d}{a-d}\right)
,\text{ }a>d\text{.} \label{25}%
\end{equation}
But we can also calculate the above integral in the following way: We consider
the power series representation of $f(t)=\left(  t-d\right)  ^{-1}$ centered
at $t=a$ (for $a>d$),%
\begin{equation}
\left(  t-d\right)  ^{-1}=-\sum_{k=0}^{\infty}\frac{\left(  t-a\right)  ^{k}%
}{\left(  d-a\right)  ^{k+1}}=\sum_{k=0}^{\infty}\frac{\left(  -1\right)
^{k}\left(  t-a\right)  ^{k}}{\left(  a-d\right)  ^{k+1}} \label{26}%
\end{equation}
and then integrate it (inside its radius of convergence) to get%
\begin{align}
\int_{a}^{t}\left(  x-d\right)  ^{-1}dx  &  =\int_{a}^{t}-\sum_{k=0}^{\infty
}\frac{\left(  t-a\right)  ^{k}}{\left(  d-a\right)  ^{k+1}}dx\nonumber\\
&  =-\sum_{k=0}^{\infty}\frac{1}{\left(  d-a\right)  ^{k+1}}\frac{\left(
t-a\right)  ^{k+1}}{k+1}\label{27a}\\
&  =\sum_{k=0}^{\infty}\frac{\left(  -1\right)  ^{k}}{k+1}\left(  \frac
{t-a}{a-d}\right)  ^{k+1}, \label{27b}%
\end{align}
where either of the last two series are valid representations for $\ln\left(
\frac{t-d}{a-d}\right)  $.\newline Now lets consider the explicit fractional
case $0<\alpha<1$. By definition, we have%
\begin{equation}
\left[  _{a}J_{t}^{\alpha}\left(  x-d\right)  ^{-1}\right]  (t)=\frac
{1}{\Gamma\left(  \alpha\right)  }\int_{a}^{t}\left(  x-d\right)  ^{-1}\left(
t-x\right)  ^{\alpha-1}dx, \label{28}%
\end{equation}
proceeding with integration by parts in the same way as the previous cases,
one get the result%
\begin{equation}
\left[  _{a}J_{t}^{\alpha}\left(  x-d\right)  ^{-1}\right]  (t)=\sum
_{k=0}^{\mathbf{\infty}}\frac{\left(  -1\right)  ^{k}\Gamma\left(  1+k\right)
\epsilon^{-\left(  1+k\right)  }\left(  t-d^{+}\right)  ^{\alpha+k}}%
{\Gamma\left(  \alpha+k+1\right)  }, \label{29}%
\end{equation}
for $t\in\left[  d^{+},d^{+}+\epsilon\right)  $, which is nothing else but the
formula in Eq.(\ref{Ie}) when we set $m=1$.\newline To recover the original
expressions in Eq.(\ref{Ia}) and Eq.(\ref{Ib}) we can simply verify that since
$\beta=-m$, then%
\[
\frac{\left(  -1\right)  ^{k}\Gamma\left(  m+k\right)  }{\Gamma\left(
m\right)  }=\frac{\left(  -1\right)  ^{k}\Gamma\left(  -\beta+k\right)
}{\Gamma\left(  -\beta\right)  },
\]
and by Eq.(\ref{IdAA}) together with the identities in Eq.(\ref{IdA}) and
Eq.(\ref{IdB}), we have that%
\[
\frac{\left(  -1\right)  ^{k}\Gamma\left(  -\beta+k\right)  }{\Gamma\left(
-\beta\right)  }=\left(  -1\right)  ^{k}\left(  -\beta\right)  _{k}=\left(
\beta\right)  _{-k}=\frac{\Gamma\left(  \beta+1\right)  }{\Gamma\left(
\beta-k+1\right)  }.
\]
\newline Finally, for the simplest case $\beta=m\in%
\mathbb{N}
$, $Dom(f)=%
\mathbb{R}
$, so it really doesn't matter where we take the lower limit $t=a$, so%
\begin{align}
\left[  _{a}J_{t}^{\alpha}\left(  x-d\right)  ^{m}\right]  (t)  &  =\frac
{1}{\Gamma\left(  \alpha\right)  }\int_{a}^{t}(x-d)^{m}\left(  t-x\right)
^{\alpha-1}dx\nonumber\\
&  =\text{after }m\text{ integration by parts}\nonumber\\
&  =%
{\displaystyle\sum\limits_{k=0}^{m}}
\frac{\left(  m\right)  _{-k}\left(  a-d\right)  ^{m-k}\left(  t-a\right)
^{\alpha+k}}{\Gamma\left(  \alpha+k+1\right)  }\nonumber\\
&  =%
{\displaystyle\sum\limits_{k=0}^{m}}
\frac{\Gamma\left(  m+1\right)  \left(  a-d\right)  ^{m-k}\left(  t-a\right)
^{\alpha+k}}{\Gamma\left(  m-k+1\right)  \Gamma\left(  \alpha+k+1\right)
},\text{ }t\in%
\mathbb{R}
, \label{7}%
\end{align}
where we have made use of the descending Pochhammer symbol Eq.(\ref{PocD}) and
the identity in Eq.(\ref{IdB}).
\end{proof}

\begin{corollary}
\label{Cor}Consider the hypothesis of \emph{Theorem \ref{Th1}}. If the lower
limit $t=a\in U\subset\Omega$, where $U$ is an interval where the power
function is analytic, then calculating its \emph{RLFI} can be done by
operating on its Taylor series expansions.
\end{corollary}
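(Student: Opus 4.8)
The plan is to show that the series already produced in Eq.(\ref{Ia}) is exactly what one obtains by applying the operator $_{a}J_{t}^{\alpha}$ termwise to the Taylor expansion of $f$ about the center $t=a$, so that ``computing the RLFI on the Taylor series'' and ``computing the RLFI directly'' coincide. First I would write down the Taylor (binomial) expansion of $f(t)=(t-d)^{\beta}$ centered at $t=a$. Since $f$ is analytic on the interval $U\ni a$ and its only singularity is at $t=d$, one has
\[
(t-d)^{\beta}=\bigl((t-a)+(a-d)\bigr)^{\beta}=\sum_{k=0}^{\infty}\binom{\beta}{k}(a-d)^{\beta-k}(t-a)^{k},
\]
which converges for $|t-a|<|a-d|=\epsilon$, the radius of convergence being the distance from the center $a$ to the singularity $d$.

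Next I would recall the centered monomial formula, i.e. the special case of Eq.(\ref{Ipower}) with $d\to a$ and $\beta\to k\in\mathbb{N}_{0}$,
\[
\bigl[\,_{a}J_{t}^{\alpha}(x-a)^{k}\bigr](t)=\frac{\Gamma(k+1)}{\Gamma(\alpha+k+1)}(t-a)^{\alpha+k},
\]
which is unproblematic because $k\geq 0>-1$. The heart of the argument is then to apply $_{a}J_{t}^{\alpha}$ to the series termwise and to justify interchanging the operator with the infinite sum. Since $_{a}J_{t}^{\alpha}$ acts by integrating against the integrable kernel $(t-x)^{\alpha-1}/\Gamma(\alpha)$ over $[a,t]$, I would argue that for $t\in\bigl[a,a+\tfrac{\epsilon}{2}\bigr)$ the whole interval of integration $[a,t]$ lies strictly inside the disk of convergence (every $x\in[a,t]$ satisfies $|x-a|<\tfrac{\epsilon}{2}<\epsilon$); hence the partial sums converge uniformly on $[a,t]$, and uniform convergence (equivalently, dominated convergence against the integrable kernel) legitimizes exchanging sum and integral.

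Carrying out the interchange and using the identity $\binom{\beta}{k}\Gamma(k+1)=\Gamma(\beta+1)/\Gamma(\beta-k+1)$, which is immediate from the definition Eq.(\ref{Bin}) of the generalized binomial coefficient, I would obtain
\[
\bigl[\,_{a}J_{t}^{\alpha}(x-d)^{\beta}\bigr](t)=\sum_{k=0}^{\infty}\binom{\beta}{k}(a-d)^{\beta-k}\frac{\Gamma(k+1)}{\Gamma(\alpha+k+1)}(t-a)^{\alpha+k}=\sum_{k=0}^{\infty}\frac{\Gamma(\beta+1)(a-d)^{\beta-k}(t-a)^{\alpha+k}}{\Gamma(\beta-k+1)\Gamma(\alpha+k+1)},
\]
which is precisely Eq.(\ref{Ia}). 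This confirms that on the analyticity interval $U$ the RLFI may be evaluated by operating on the Taylor series. The main obstacle I anticipate is exactly the justification of the termwise integration: one must verify that the restriction $t\in\bigl[a,a+\tfrac{\epsilon}{2}\bigr)$ keeps $[a,t]$ within the radius of convergence so that uniform convergence holds — and this is the very same neighborhood restriction already dictated by Theorem \ref{Th1}, so no new hypothesis is needed.
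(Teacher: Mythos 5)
Your proposal is correct and follows essentially the same route as the paper's own (one-line) proof: expand $f$ in its Taylor series about $t=a$, apply $_{a}J_{t}^{\alpha}$ term by term using the centered monomial formula, and invoke uniform convergence on $[a,t]$ to justify the interchange, recovering Eq.(\ref{Ia}). You simply make explicit the details (the binomial expansion, the identity $\binom{\beta}{k}\Gamma(k+1)=\Gamma(\beta+1)/\Gamma(\beta-k+1)$, and the check that $t\in\bigl[a,a+\tfrac{\epsilon}{2}\bigr)$ keeps the integration interval inside the disk of convergence) that the paper leaves implicit.
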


\begin{proof}
Just consider the Taylor series expansion of $f(t)=\left(  t-d\right)
^{\beta}$ at $t=a$ and operate $_{a}J_{t}^{\alpha}$ on it. Due to the uniform
convergence, we can integrate term by term obtaining the expressions listed on
Theorem \ref{Th1}.
\end{proof}

\begin{remark}
\label{R1}It is easily verifiable, that each of the expressions in
\emph{Eq.(\ref{Ia})} - \emph{Eq.(\ref{Ie})}, reduces to the expected integer
order expressions when choosing $\alpha=0$ or $\alpha=1$ and, as an example,
we will show this for \emph{Eq.(\ref{Ia})} \emph{(}the others are
similar\emph{)}. Indeed, setting $\alpha=0$ on \emph{Eq.(\ref{Ia})} we
directly get the Taylor series expansion of $f(t)=\left(  t-d\right)  ^{\beta
}$ centered on $t=a$, while setting $\alpha=1$ on \emph{Eq.(\ref{Ia})} we get%
\begin{equation}
\left[  _{a}J_{t}\left(  x-d\right)  ^{\beta}\right]  (t)=\sum_{k=0}^{\infty
}\frac{\Gamma\left(  \beta+1\right)  \left(  a-d\right)  ^{\beta-k}\left(
t-a\right)  ^{k+1}}{\Gamma\left(  \beta-k+1\right)  \Gamma\left(  k+2\right)
}. \label{E1}%
\end{equation}
\newline On the other hand, since the power function is analytic in the
interval in consideration, then%
\begin{align}
\frac{\left(  t-d\right)  ^{\beta+1}-\left(  a-d\right)  ^{\beta+1}}{\beta+1}
&  =\int_{a}^{t}\left(  x-d\right)  ^{\beta}dx\nonumber\\
&  =\int_{a}^{t}\sum_{k=0}^{\infty}\left(
\begin{array}
[c]{c}%
\beta\\
k
\end{array}
\right)  \left(  a-d\right)  ^{\beta-k}\left(  x-a\right)  ^{k}dx\nonumber\\
&  =\sum_{k=0}^{\infty}\left(
\begin{array}
[c]{c}%
\beta\\
k
\end{array}
\right)  \left(  a-d\right)  ^{\beta-k}\int_{a}^{t}\left(  x-a\right)
^{k}dx\nonumber\\
&  =\sum_{k=0}^{\infty}\left(
\begin{array}
[c]{c}%
\beta\\
k
\end{array}
\right)  \left(  a-d\right)  ^{\beta-k}\frac{\left(  t-a\right)  ^{k+1}}{k+1},
\label{E2}%
\end{align}
and clearly, \emph{Eq.(\ref{E1})} equals \emph{Eq.(\ref{E2})} since $\left(
k+1\right)  \Gamma\left(  k+1\right)  =\Gamma\left(  k+2\right)  $.
\end{remark}

\qquad Before ending this section, there's one final observation that we want
to call attention. All expressions listed on Theorem \ref{Th1} for the power
functions $f(t)=\left(  t-d\right)  ^{\beta}$ are only valid when we choose
the lower limit $a\neq d$, which guarantees the convergence of the series in
their respective intervals of definition ($\left[  a,a+\frac{\epsilon}%
{2}\right)  $ or $\left[  d^{+},d^{+}+\epsilon\right)  $, with $\epsilon
=\left\vert d-a\right\vert >0$). The only exception to this is the case when
$\beta=m\in%
\mathbb{N}
_{0}$, where if we set $a=d$ (that means $\epsilon=0$) in Eq.(\ref{Ic}) it
reduces to%
\begin{equation}
\left[  _{a}J_{t}^{\alpha}\left(  x-d\right)  ^{m}\right]  (t)=\frac
{\Gamma\left(  m+1\right)  \left(  t-a\right)  ^{\alpha+m}}{\Gamma\left(
\alpha+m+1\right)  },\text{ }a,t\in%
\mathbb{R}
, \label{E3}%
\end{equation}
which agrees with the usual formula in Eq.(\ref{Ipower}). But this is to be
expected. In one hand, Eq.(\ref{Ic}) is a finite sum and its convergence
doesn't depend on $\epsilon$, on the other hand we known that the only class
of power functions that are analytic everywhere in its domain of definition
(which in such case includes the point $t=d$) are the polynomials. For all
other values of the index $\beta$, these functions are not analytic at $t=d$
even if $t=d$ belongs to its domain (e.g., consider $f(t)=\left(  t-d\right)
^{\frac{1}{2}}$).

\section{Riemann-Liouville Differentiation of Power Functions}

The result for the RLFD comes as a corollary of \textbf{Theorem \ref{Th1}}.

\begin{corollary}
\label{Cor1}Let $f(t)=\left(  t-d\right)  ^{\beta}$, $d,\beta\in%
\mathbb{R}
$ and suppose $t\in\Omega\subset%
\mathbb{R}
$, where $\Omega$ is an interval where $f$ is properly defined as real valued
function. Then\newline\emph{(I)}
\begin{equation}
\left[  _{a}D_{t}^{\alpha}\left(  x-d\right)  ^{\beta}\right]  (t)=\sum
_{k=0}^{\mathbf{\infty}}\frac{\Gamma\left(  \beta+1\right)  \left(
a-d\right)  ^{\beta-k}\left(  t-a\right)  ^{-\alpha+k}}{\Gamma\left(
\beta-k+1\right)  \Gamma\left(  -\alpha+k+1\right)  },\text{ }t\in\left[
a,a+\frac{\epsilon}{2}\right)  \label{Da}%
\end{equation}
\newline\emph{(II)}
\begin{equation}
\left[  _{d^{+}}D_{t}^{\alpha}\left(  x-d\right)  ^{\beta}\right]
(t)=\sum_{k=0}^{\mathbf{\infty}}\frac{\Gamma\left(  \beta+1\right)
\epsilon^{\beta-k}\left(  t-d^{+}\right)  ^{-\alpha+k}}{\Gamma\left(
\beta-k+1\right)  \Gamma\left(  -\alpha+k+1\right)  },\text{ }t\in\left[
d^{+},d^{+}+\epsilon\right)  . \label{Db}%
\end{equation}
\newline Particularly, if $\beta=m\in%
\mathbb{N}
_{0}$, then%
\begin{equation}
\left[  _{a}D_{t}^{\alpha}\left(  x-d\right)  ^{m}\right]  (t)=\sum_{k=0}%
^{m}\frac{\Gamma\left(  m+1\right)  \left(  a-d\right)  ^{m-k}\left(
t-a\right)  ^{k-\alpha}}{\Gamma\left(  m-k+1\right)  \Gamma\left(
k+1-\alpha\right)  },\text{ }a,t\in%
\mathbb{R}
, \label{Dc}%
\end{equation}
while if $\beta=-m$ \ with $m\in%
\mathbb{N}
$, then we can use alternatively the following expressions as well%
\begin{align}
\left[  _{a}D_{t}^{\alpha}\left(  x-d\right)  ^{-m}\right]  (t)  &
=\sum_{k=0}^{\mathbf{\infty}}\frac{\left(  -1\right)  ^{k}\Gamma\left(
m+k\right)  \left(  a-d\right)  ^{-m-k}\left(  t-a\right)  ^{-\alpha+k}%
}{\Gamma\left(  m\right)  \Gamma\left(  -\alpha+k+1\right)  },\label{47aa}\\
\left[  _{d^{+}}D_{t}^{\alpha}\left(  x-d\right)  ^{-m}\right]  (t)  &
=\sum_{k=0}^{\mathbf{\infty}}\frac{\left(  -1\right)  ^{k}\Gamma\left(
m+k\right)  \epsilon^{-\left(  m+k\right)  }\epsilon^{-m-k}\left(
t-d^{+}\right)  ^{-\alpha+k}}{\Gamma\left(  m\right)  \Gamma\left(
-\alpha+k+1\right)  }, \label{47bb}%
\end{align}
with $t\in\left[  a,a+\frac{\epsilon}{2}\right)  $ and $t\in\left[
d^{+},d^{+}+\epsilon\right)  $, respectively and where in all cases,
$\epsilon=\left\vert d-a\right\vert >0.$
\end{corollary}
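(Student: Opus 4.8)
The plan is to exploit the compositional definition of the RLFD recorded in Eq.(\ref{2}): since we have restricted attention to $0<\alpha<1$, we have $\mathbf{n}=1$ and so $\left[ _a D_t^\alpha f\right](t)=\frac{d}{dt}\left[ _a J_t^{1-\alpha} f\right](t)$. The whole strategy is therefore to feed the already-established integral formula of Theorem \ref{Th1} into this one derivative. Concretely, I would first replace $\alpha$ by $1-\alpha$ throughout Eq.(\ref{Ia}); because $0<\alpha<1$ forces $0<1-\alpha<1$, the hypotheses of Theorem \ref{Th1} are satisfied verbatim, and its conclusion gives, on the same neighborhood $\left[a,a+\frac{\epsilon}{2}\right)$,
\[
\left[ _a J_t^{1-\alpha}\left(x-d\right)^{\beta}\right](t)=\sum_{k=0}^{\infty}\frac{\Gamma\left(\beta+1\right)\left(a-d\right)^{\beta-k}\left(t-a\right)^{1-\alpha+k}}{\Gamma\left(\beta-k+1\right)\Gamma\left(2-\alpha+k\right)}.
\]

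The second step is to differentiate this series once in $t$. Granting for the moment that $\frac{d}{dt}$ may be taken inside the sum, each term contributes $\frac{d}{dt}\left(t-a\right)^{1-\alpha+k}=\left(1-\alpha+k\right)\left(t-a\right)^{-\alpha+k}$, and the emerging factor $\left(1-\alpha+k\right)$ absorbs into the denominator via the gamma recurrence $\Gamma\left(2-\alpha+k\right)=\left(1-\alpha+k\right)\Gamma\left(1-\alpha+k\right)$, collapsing the gamma quotient to $1/\Gamma\left(-\alpha+k+1\right)$. This yields Eq.(\ref{Da}) directly; applying the identical manipulation to Eq.(\ref{Ib}) gives the displaced version Eq.(\ref{Db}). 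For the distinguished indices the same derivative does the rest: taking $\beta=m\in\mathbb{N}_0$ truncates the sum to the polynomial expression Eq.(\ref{Dc}), while differentiating the alternative forms Eq.(\ref{Id})--Eq.(\ref{Ie}) produces Eq.(\ref{47aa})--Eq.(\ref{47bb}).

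The only step requiring genuine care — and hence the main obstacle — is the justification of the term-by-term differentiation. Here I would note that, after factoring out the prefactor $\left(t-a\right)^{1-\alpha}$, the integral series of Theorem \ref{Th1} is an ordinary power series in $\left(t-a\right)$, and Theorem \ref{Th1} already guarantees its convergence on $\left[a,a+\frac{\epsilon}{2}\right)$. A power series may be differentiated term by term throughout the interior of its interval of convergence, so the formally differentiated series converges uniformly on every compact subinterval of $\left(a,a+\frac{\epsilon}{2}\right)$ and genuinely represents the derivative of the integral there; the product rule applied to $\left(t-a\right)^{1-\alpha}$ times the inner power series reproduces exactly the coefficient-wise computation above. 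This legitimizes the interchange and completes the argument. Equivalently, one may invoke Corollary \ref{Cor}: on a neighborhood where $f$ is analytic the operator acts on the Taylor expansion term by term, and the uniform convergence supplies the needed interchange of $\frac{d}{dt}$ with the summation.
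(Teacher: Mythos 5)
Your proposal is correct and follows exactly the route the paper takes: the paper's proof of Corollary \ref{Cor1} likewise invokes $\left[\, _{a}D_{t}^{\alpha}f\right](t)=\frac{d}{dt}\left[\, _{a}J_{t}^{1-\alpha}f\right](t)$ for $0<\alpha<1$ and differentiates the series from Theorem \ref{Th1} with $\alpha$ replaced by $1-\alpha$. Your added justification of the term-by-term differentiation (factoring out $\left(t-a\right)^{1-\alpha}$ and using the standard power-series theorem) is a detail the paper leaves implicit, and it is sound.
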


\begin{proof}
We can use a similar tedious analysis as in Theorem \ref{Th1} or simply
realize that by the definition of the RLFD for $0<\alpha<1$ (refer to
Eq.(\ref{2})) we have%
\begin{equation}
\left[  _{a}D_{t}^{\alpha}\left(  x-d\right)  ^{\beta}\right]  (t)=\frac
{d}{dt}\left[  _{a}J_{t}^{1-\alpha}\left(  x-d\right)  ^{\beta}\right]  (t),
\label{PD1}%
\end{equation}
then it suffice to differentiate the corresponding expressions obtained for
$\left[  _{a}J_{t}^{1-\alpha}\left(  x-d\right)  ^{\beta}\right]  (t)$ using
the results of Theorem \ref{Th1}.
\end{proof}

\begin{corollary}
Consider the hypothesis of \emph{Corollary \ref{Cor1}}. If the lower limit
$t=a\in U\subset\Omega$, where $U$ is an interval where the power function is
analytic, then calculating its \emph{RLFD} can be done by operating on its
Taylor series expansions.
\end{corollary}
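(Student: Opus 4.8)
The plan is to follow the same philosophy as the proof of Corollary~\ref{Cor}, but with the extra differentiation step dictated by the definition of the RLFD. For $0<\alpha<1$ the identity Eq.(\ref{2}) gives
\begin{equation*}
\left[_{a}D_{t}^{\alpha}(x-d)^{\beta}\right](t)=\frac{d}{dt}\left[_{a}J_{t}^{1-\alpha}(x-d)^{\beta}\right](t),
\end{equation*}
so it suffices to produce a series for the inner RLFI of order $1-\alpha$ and then differentiate it. First I would expand $f(t)=(t-d)^{\beta}$ in its Taylor series centered at $a\in U$ and apply $_{a}J_{t}^{1-\alpha}$ term by term; by Corollary~\ref{Cor} this is legitimate and yields the RLFI series with $\alpha$ replaced by $1-\alpha$,
\begin{equation*}
\left[_{a}J_{t}^{1-\alpha}(x-d)^{\beta}\right](t)=\sum_{k=0}^{\infty}\frac{\Gamma(\beta+1)(a-d)^{\beta-k}(t-a)^{1-\alpha+k}}{\Gamma(\beta-k+1)\,\Gamma(2-\alpha+k)}.
\end{equation*}

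Differentiating this series term by term and using $(1-\alpha+k)\Gamma(1-\alpha+k)=\Gamma(2-\alpha+k)$ cancels one gamma factor and reproduces precisely Eq.(\ref{Da}); the displaced, polynomial, and negative-integer variants Eq.(\ref{Db})--Eq.(\ref{47bb}) follow by the same manipulation applied to the corresponding series of Theorem~\ref{Th1}. From there the computation is just the gamma-function bookkeeping already carried out in Corollary~\ref{Cor1}.

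The one genuinely nontrivial point, and the main obstacle, is justifying the term-by-term differentiation, since, unlike integration, this does not follow from uniform convergence of the integrand series alone. The clean way to settle it is to observe that the RLFI series above factors as $(t-a)^{1-\alpha}g(t)$, where $g(t)=\sum_{k\ge 0}c_{k}(t-a)^{k}$ is an \emph{ordinary} power series with coefficients $c_{k}=\Gamma(\beta+1)(a-d)^{\beta-k}/[\Gamma(\beta-k+1)\Gamma(2-\alpha+k)]$. One checks that $\limsup_{k}|c_{k}|^{1/k}=|a-d|^{-1}$, the gamma ratio contributing a factor tending to $1$ exactly as in the asymptotics Eq.(\ref{IP5})--Eq.(\ref{IP7}); hence $g$ has radius of convergence $\epsilon=|a-d|$, which strictly contains the interval $[a,a+\tfrac{\epsilon}{2})$, and inside its radius of convergence an ordinary power series may be differentiated term by term. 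Applying the product rule to $(t-a)^{1-\alpha}g(t)$ and differentiating $g$ term by term then rigorously produces the claimed RLFD series on the interior of the stated interval, where the $(t-a)^{-\alpha}$ prefactor is finite. Alternatively one could bypass the factorisation and verify uniform convergence of the differentiated series directly via the Weierstrass $M$-test, using that the geometric factor $|(t-a)/(a-d)|^{k}$ dominates the polynomial growth of the gamma ratio by Eq.(\ref{IP9}); in either route the analyticity hypothesis $a\in U$ is precisely what guarantees $a\neq d$, hence $\epsilon>0$, throughout.
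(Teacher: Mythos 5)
Your proposal is correct and follows essentially the same route as the paper, whose entire proof is the one-liner ``Same as Corollary \ref{Cor}'': expand $f$ in its Taylor series at $t=a$, apply the operator term by term via $_{a}D_{t}^{\alpha}=\frac{d}{dt}\,{}_{a}J_{t}^{1-\alpha}$, and recover the series of Corollary \ref{Cor1}. The only difference is that you explicitly justify the term-by-term differentiation (by factoring the RLFI series as $(t-a)^{1-\alpha}g(t)$ with $g$ an ordinary power series of radius $\epsilon=|a-d|$), a step the paper leaves implicit by appealing only to the uniform convergence used for integration; this is a genuine and correct strengthening of the argument rather than a different approach.
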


\begin{proof}
Same as Corollary \ref{Cor}.
\end{proof}

\begin{remark}
The expressions in \emph{Eq.(\ref{Da})} - \emph{Eq.(\ref{Db})} also reduce to
the expected integer order formulas when setting $\alpha=0$ and $\alpha=1$ and
the proofs can be done in a similar way as in \emph{Remark \ref{R1}}.
\end{remark}

\section{Summarizing the Results}

We have then the following expressions for the RLFI of order $0<\alpha<1$ for
power functions of any order.%
\begin{align*}
\left[  _{a}J_{t}^{\alpha}\left(  x-d\right)  ^{m}\right]  (t)  &  =%
{\displaystyle\sum\limits_{k=0}^{m}}
\frac{\Gamma\left(  m+1\right)  \left(  a-d\right)  ^{m-k}\left(  t-a\right)
^{\alpha+k}}{\Gamma\left(  m-k+1\right)  \Gamma\left(  \alpha+k+1\right)
},\text{ }m\in%
\mathbb{N}
_{0},\text{ }t\in%
\mathbb{R}
,\\
\left[  _{a}J_{t}^{\alpha}\left(  x-d\right)  ^{\beta}\right]  (t)  &
=\sum_{k=0}^{\infty}\frac{\Gamma\left(  \beta+1\right)  \left(  a-d\right)
^{\beta-k}\left(  t-a\right)  ^{\alpha+k}}{\Gamma\left(  \beta-k+1\right)
\Gamma\left(  \alpha+k+1\right)  },\text{ }\beta\in%
\mathbb{R}
\setminus%
\mathbb{N}
_{0},\text{ }t\in\Omega_{1},\\
\left[  _{d^{+}}J_{t}^{\alpha}\left(  x-d\right)  ^{\beta}\right]  (t)  &
=\sum_{k=0}^{\mathbf{\infty}}\frac{\Gamma\left(  \beta+1\right)
\epsilon^{\beta-k}\left(  t-d^{+}\right)  ^{\alpha+k}}{\Gamma\left(
\beta-k+1\right)  \Gamma\left(  \alpha+k+1\right)  },\text{ }\beta\in%
\mathbb{R}
\setminus%
\mathbb{N}
_{0},\text{ }t\in\Omega_{2},
\end{align*}
where $\Omega_{1}=\left[  a,a+\frac{\left\vert d-a\right\vert }{2}\right)  $
and $\Omega_{2}=\left[  d^{+},d^{+}+\epsilon\right)  $. Particularly, if
$\beta=-m$ \ with $m\in%
\mathbb{N}
$, then we can use alternatively these expressions as well
\begin{align*}
\left[  _{a}J_{t}^{\alpha}\left(  x-d\right)  ^{-m}\right]  (t)  &
=\sum_{k=0}^{\infty}\frac{\left(  -1\right)  ^{k}\Gamma\left(  m+k\right)
\left(  a-d\right)  ^{-\left(  m+k\right)  }\left(  t-a\right)  ^{\alpha+k}%
}{\Gamma\left(  m\right)  \Gamma\left(  \alpha+k+1\right)  },\text{ }%
t\in\Omega_{1},\\
\left[  _{d^{+}}J_{t}^{\alpha}\left(  x-d\right)  ^{-m}\right]  (t)  &
=\sum_{k=0}^{\mathbf{\infty}}\frac{\left(  -1\right)  ^{k}\Gamma\left(
m+k\right)  \epsilon^{-\left(  m+k\right)  }\left(  t-d^{+}\right)
^{\alpha+k}}{\Gamma\left(  m\right)  \Gamma\left(  \alpha+k+1\right)  },\text{
}t\in\Omega_{2}.
\end{align*}

Now for the RLFD of order $0<\alpha<1$ for power functions of any order, we
have%
\begin{align*}
\left[  _{a}D_{t}^{\alpha}\left(  x-d\right)  ^{m}\right]  (t)  &  =\sum
_{k=0}^{m}\frac{\Gamma\left(  m+1\right)  \left(  a-d\right)  ^{m-k}\left(
t-a\right)  ^{k-\alpha}}{\Gamma\left(  m-k+1\right)  \Gamma\left(
k+1-\alpha\right)  },m\in%
\mathbb{N}
_{0},\text{ }t\in%
\mathbb{R}
,\\
\left[  _{a}D_{t}^{\alpha}\left(  x-d\right)  ^{\beta}\right]  (t)  &
=\sum_{k=0}^{\mathbf{\infty}}\frac{\Gamma\left(  \beta+1\right)  \left(
a-d\right)  ^{\beta-k}\left(  t-a\right)  ^{-\alpha+k}}{\Gamma\left(
\beta-k+1\right)  \Gamma\left(  -\alpha+k+1\right)  },\text{ }\beta\in%
\mathbb{R}
\setminus%
\mathbb{N}
_{0},\text{ }t\in\Omega_{1},\\
\left[  _{d^{+}}D_{t}^{\alpha}\left(  x-d\right)  ^{\beta}\right]  (t)  &
=\sum_{k=0}^{\mathbf{\infty}}\frac{\Gamma\left(  \beta+1\right)
\epsilon^{\beta-k}\left(  t-d^{+}\right)  ^{-\alpha+k}}{\Gamma\left(
\beta-k+1\right)  \Gamma\left(  -\alpha+k+1\right)  },\text{ }\beta\in%
\mathbb{R}
\setminus%
\mathbb{N}
_{0},\text{ }t\in\Omega_{2},
\end{align*}
particularly, if $\beta=-m$ \ with $m\in%
\mathbb{N}
$, then we can use alternatively these expressions as well%
\begin{align*}
\left[  _{a}D_{t}^{\alpha}\left(  x-d\right)  ^{-m}\right]  (t)  &
=\sum_{k=0}^{\mathbf{\infty}}\frac{\left(  -1\right)  ^{k}\Gamma\left(
m+k\right)  \left(  a-d\right)  ^{-\left(  m+k\right)  }\left(  t-a\right)
^{-\alpha+k}}{\Gamma\left(  m\right)  \Gamma\left(  -\alpha+k+1\right)
},\text{ }t\in\Omega_{1},\\
\left[  _{d^{+}}D_{t}^{\alpha}\left(  x-d\right)  ^{\beta}\right]  (t)  &
=\sum_{k=0}^{\mathbf{\infty}}\frac{\left(  -1\right)  ^{k}\Gamma\left(
m+k\right)  \epsilon^{-\left(  m+k\right)  }\left(  t-d^{+}\right)
^{-\alpha+k}}{\Gamma\left(  m\right)  \Gamma\left(  -\alpha+k+1\right)
},\text{ }t\in\Omega_{2}.
\end{align*}

Finally, we point out that in the appendix we have related the expressions for
$\left[  _{a}J_{t}^{\alpha}\left(  x-d\right)  ^{\beta}\right]  (t)$, $\left[
_{d^{+}}J_{t}^{\alpha}\left(  x-d\right)  ^{\beta}\right]  (t)$, $\left[
_{a}D_{t}^{\alpha}\left(  x-d\right)  ^{\beta}\right]  (t)$ and $\left[
_{d^{+}}D_{t}^{\alpha}\left(  x-d\right)  ^{\beta}\right]  (t)$ in terms of
the hypergeometric functions.

\section{Conclusion}

In this work, we have shown that it is possible to calculate the RLFI and RLFD
of order $0<\alpha<1$ of power functions $\left(  t-\ast\right)  ^{\beta}$
with $\beta$ being any real value and we are able to express the results in
terms of function series of the type $\left(  t-\ast\right)  ^{\pm\alpha+k}$
and that such expressions can be related to the famous hypergeometric
functions of the Mathematical-Physics. We have also observed that since the
Riemann-Liouville formulations of the fractional integral and differential
operators are actually defined in terms of the notion of a \emph{definite
integral}, then the series obtained are convergent on a proper neighborhood of
the lower limit, therefore careful attention must be taken to check if the
lower limit belongs (or not) to the original function's domain. In fact, the
whole problem of \textquotedblleft not being able to integrodifferentiate the
power functions of index strictly less than $-1$\textquotedblright\ is not
really distinct than being (or not) able to integrate ordinarily power
functions with a singularity in the lower limit. Recall that in ordinary
calculus (integer orders), the integral's final result of achieving or not a
valid expression actually depends on \textquotedblleft how strong is the
singularity in the lower limit\textquotedblright\ versus how large is the
order of the $n$-fold integral, so for the sake of instigating future
discussions and works, this strongly suggest further investigation when
setting the order $\alpha$ with values greater than unity. It is also
interesting to look if one can obtain a better formulation of the notions of
an $\alpha$-primitive and therefore something as a \emph{fractional indefinite
integral of order }$\alpha$.

\section{Acknowledgment}

F. G. Rodrigues would like to acknowledge the support from CNPq under grant 200832/2015-8.

\appendix

\section{Appendix}

We will show that following some algebraic manipulations and use of identities
we can write the expressions found for the RLFI and RLFD also in terms of the
hypergeometric functions. For starters, we will describe the steps for
Eq.(\ref{Ib}) and the others are done similarly.

So we have that%
\begin{align*}
\left[  _{d^{+}}J_{t}^{\alpha}\left(  x-d\right)  ^{\beta}\right]  (t)  &
=\sum_{k=0}^{\mathbf{\infty}}\frac{\Gamma\left(  \beta+1\right)
\epsilon^{\beta-k}\left(  t-d^{+}\right)  ^{\alpha+k}}{\Gamma\left(
\beta-k+1\right)  \Gamma\left(  \alpha+k+1\right)  }\\
&  =\Gamma\left(  \beta+1\right)  \epsilon^{\alpha+\beta}\left(  \frac
{t-d}{\epsilon}-1\right)  ^{\alpha}\sum_{k=0}^{\mathbf{\infty}}\frac{\left(
\frac{t-d}{\epsilon}-1\right)  ^{k}}{\Gamma\left(  \beta-k+1\right)
\Gamma\left(  \alpha+k+1\right)  }.
\end{align*}

To simplify the notation we introduce $z=\frac{t-d}{\epsilon}>0$ and $\left[
_{d^{+}}J_{t}^{\alpha}\left(  x-d\right)  ^{\beta}\right]  (t)=J_{d^{+}%
}^{\alpha}(t)$. Hence,%
\begin{equation}
J_{d^{+}}^{\alpha}(t)=\Gamma\left(  \beta+1\right)  \epsilon^{\alpha+\beta
}\left(  z-1\right)  ^{\alpha}\sum_{k=0}^{\mathbf{\infty}}\frac{\left(
z-1\right)  ^{k}}{\Gamma\left(  \beta-k+1\right)  \Gamma\left(  \alpha
+k+1\right)  }. \label{A2}%
\end{equation}

Then using the Pochhammer symbols notation and the identity%
\[
\frac{\left(  -\beta\right)  _{k}}{\left(  \alpha+1\right)  _{k}}=\left(
-1\right)  ^{k}\frac{\Gamma\left(  \beta+1\right)  }{\Gamma\left(
\beta-k+1\right)  }\frac{\Gamma\left(  \alpha+1\right)  }{\Gamma\left(
\alpha+k+1\right)  },
\]
which implies%
\begin{equation}
\frac{1}{\Gamma\left(  \beta-k+1\right)  \Gamma\left(  \alpha+k+1\right)
}=\frac{\left(  -1\right)  ^{k}}{\Gamma\left(  \beta+1\right)  \Gamma\left(
\alpha+1\right)  }\frac{\left(  -\beta\right)  _{k}}{\left(  \alpha+1\right)
_{k}}, \label{A3}%
\end{equation}
we have, after substituting Eq.(\ref{A3}) in Eq.(\ref{A2}) that%
\[
J_{d^{+}}^{\alpha}(t)=\frac{\epsilon^{\alpha+\beta}\left(  z-1\right)
^{\alpha}}{\Gamma\left(  \alpha+1\right)  }\sum_{k=0}^{\mathbf{\infty}}%
\frac{\left(  -1\right)  ^{k}\left(  -\beta\right)  _{k}}{\left(
\alpha+1\right)  _{k}}\left(  z-1\right)  ^{k},
\]
and since $\left(  1\right)  _{k}=k!$ we have%
\begin{equation}
J_{d^{+}}^{\alpha}(t)=\frac{\epsilon^{\alpha+\beta}\left(  z-1\right)
^{\alpha}}{\Gamma\left(  \alpha+1\right)  }\sum_{k=0}^{\mathbf{\infty}}%
\frac{\left(  1\right)  _{k}\left(  -\beta\right)  _{k}}{\left(
\alpha+1\right)  _{k}}\frac{\left(  1-z\right)  ^{k}}{k!}. \label{A4}%
\end{equation}

Now, we recall that the hypergeometric function is defined by the series%
\[
_{2}F_{1}\left(  a,b;c;\xi\right)  =\sum_{k=0}^{\mathbf{\infty}}\frac{\left(
a\right)  _{k}\left(  b\right)  _{k}}{\left(  c\right)  _{k}}\frac{\xi^{k}%
}{k!},
\]
thus Eq.(\ref{A4}) can be written in the form%
\begin{equation}
J_{d^{+}}^{\alpha}(t)=\frac{\epsilon^{\alpha+\beta}\left(  z-1\right)
^{\alpha}}{\Gamma\left(  \alpha+1\right)  }\text{ }_{2}F_{1}\left(
1,-\beta;\alpha+1;1-z\right)  . \label{A5}%
\end{equation}

We can express this hypergeometric function (conveniently) as a series if, and
only if, $\alpha+\beta\neq\pm m$, $m\in%
\mathbb{N}
$ and $\left\vert \arg(z)\right\vert <\pi$ and we can rewrite this
hypergeometric function as \cite{Magnus}:%
\begin{align}
_{2}F_{1}\left(  1,-\beta;\alpha+1;1-z\right)   &  =\frac{\Gamma\left(
\alpha+1\right)  \Gamma\left(  \alpha+\beta\right)  }{\Gamma\left(
\alpha\right)  \Gamma\left(  \alpha+\beta+1\right)  }\text{ }_{2}F_{1}\left(
1,-\beta;1-\alpha-\beta;z\right)  +\label{A6}\\
&  \frac{\Gamma\left(  \alpha+1\right)  \Gamma\left(  -\alpha-\beta\right)
z^{\alpha+\beta}}{\Gamma\left(  -\beta\right)  }\text{ }_{2}F_{1}\left(
\alpha,\alpha+\beta+1;\alpha+\beta+1;z\right)  .\nonumber
\end{align}

Substituting Eq.(\ref{A6}) in Eq.(\ref{A5}) we have%
\begin{align}
J_{d^{+}}^{\alpha}(t)  &  =\frac{\epsilon^{\alpha+\beta}\left(  z-1\right)
^{\alpha}\Gamma\left(  \alpha+\beta\right)  }{\Gamma\left(  \alpha\right)
\Gamma\left(  \alpha+\beta+1\right)  }\text{ }_{2}F_{1}\left(  1,-\beta
;1-\alpha-\beta;z\right) \nonumber\\
&  +\frac{\epsilon^{\alpha+\beta}\left(  z-1\right)  ^{\alpha}\Gamma\left(
-\alpha-\beta\right)  z^{\alpha+\beta}}{\Gamma\left(  -\beta\right)  }\text{
}_{2}F_{1}\left(  \alpha,\alpha+\beta+1;\alpha+\beta+1;z\right)  . \label{A7}%
\end{align}

We can continue simplifying this last expression, using some identities for
the hypergeometric functions. First, for the second hypergeometric function in
Eq.(\ref{A7}), we have:%
\begin{align}
_{2}F_{1}\left(  \alpha,\alpha+\beta+1;\alpha+\beta+1;z\right)   &
=\sum_{k=0}^{\mathbf{\infty}}\left(  \alpha\right)  _{k}\frac{z^{k}}%
{k!}\nonumber\\
&  =\sum_{k=0}^{\mathbf{\infty}}\frac{\Gamma\left(  \alpha+k\right)  }%
{\Gamma\left(  \alpha\right)  k!}z^{k}\nonumber\\
&  =\sum_{k=0}^{\mathbf{\infty}}\left(  -1\right)  ^{k}\left(
\begin{array}
[c]{c}%
\alpha+k-1\\
k
\end{array}
\right)  \left(  -z\right)  ^{k}\nonumber\\
&  =\left(  1-z\right)  ^{-\alpha}\text{, }\left\vert z\right\vert <1.
\label{A8}%
\end{align}

While for the first hypergeometric function in Eq.(\ref{A7}) we use the
relation known as Euler transformation%
\[
_{2}F_{1}\left(  a,b;c;z\right)  =\left(  1-z\right)  ^{c-a-b}\text{ }%
_{2}F_{1}\left(  c-a;c-b,c;z\right)  ,
\]
and identify that%
\begin{align*}
c-a  &  =1-\alpha-\beta-1=-\alpha-\beta,\\
c-b  &  =1-\alpha-\beta+\beta=1-\alpha,\\
c-a-b  &  =1-\alpha-\beta-1+\beta=-\alpha.
\end{align*}

Therefore,
\[
_{2}F_{1}\left(  1,-\beta;1-\alpha-\beta;z\right)  =\left(  1-z\right)
^{-\alpha}\text{ }_{2}F_{1}\left(  -\alpha-\beta;1-\alpha,1-\alpha
-\beta;z\right)  ,
\]
and it follows that Eq.(\ref{A7}) assume the following form%
\begin{align}
J_{d^{+}}^{\alpha}(t)  &  =\frac{\left(  -1\right)  ^{\alpha}\Gamma\left(
-\alpha-\beta\right)  }{\Gamma\left(  -\beta\right)  }\left(  t-d\right)
^{\alpha+\beta}\nonumber\\
&  +\frac{\left(  -1\right)  ^{\alpha}\Gamma\left(  \alpha+\beta\right)
\epsilon^{\alpha+\beta}}{\Gamma\left(  \alpha\right)  \Gamma\left(
\alpha+\beta+1\right)  }\text{ }_{2}F_{1}\left(  -\alpha-\beta,1-\alpha
;1-\alpha-\beta;\frac{t-d}{\epsilon}\right)  . \label{A9}%
\end{align}

In a similar fashion for Eq.(\ref{Ia}), calling $\left[  _{a}J_{t}^{\alpha
}\left(  x-d\right)  ^{\beta}\right]  (t)=J_{a}^{\alpha}(t)$ we have%
\begin{align}
J_{a}^{\alpha}(t)  &  =\frac{\left(  -1\right)  ^{\alpha}\Gamma\left(
-\alpha-\beta\right)  }{\Gamma\left(  -\beta\right)  }\left(  t-d\right)
^{\alpha+\beta}\nonumber\\
&  +\frac{\left(  -1\right)  ^{\alpha}\Gamma\left(  \alpha+\beta\right)
\left(  a-d\right)  ^{\alpha+\beta}}{\Gamma\left(  \alpha\right)
\Gamma\left(  \alpha+\beta+1\right)  }\text{ }_{2}F_{1}\left(  -\alpha
-\beta,1-\alpha;1-\alpha-\beta;\frac{t-d}{a-d}\right)  . \label{A10}%
\end{align}

In a similar fashion for Eq.(\ref{Db}), calling $\left[  _{d^{+}}D_{t}%
^{\alpha}\left(  x-d\right)  ^{\beta}\right]  (t)=D_{d^{+}}^{\alpha}(t)$ we
have%
\begin{align}
D_{d^{+}}^{\alpha}(t)  &  =\frac{\left(  -1\right)  ^{-\alpha}\Gamma\left(
\alpha-\beta\right)  }{\Gamma\left(  -\beta\right)  }\left(  t-d\right)
^{-\alpha+\beta}\nonumber\\
&  +\frac{\left(  -1\right)  ^{-\alpha}\Gamma\left(  -\alpha+\beta\right)
\epsilon^{-\alpha+\beta}}{\Gamma\left(  -\alpha\right)  \Gamma\left(
-\alpha+\beta+1\right)  }\text{ }_{2}F_{1}\left(  \alpha-\beta,1+\alpha
;1+\alpha-\beta;\frac{t-d}{\epsilon}\right)  . \label{A11}%
\end{align}

In a similar fashion for Eq.(\ref{Da}), calling $\left[  _{a}D_{t}^{\alpha
}\left(  x-d\right)  ^{\beta}\right]  (t)=D_{a}^{\alpha}(t)$ we have%
\begin{align}
D_{a}^{\alpha}(t)  &  =\frac{\left(  -1\right)  ^{-\alpha}\Gamma\left(
\alpha-\beta\right)  }{\Gamma\left(  -\beta\right)  }\left(  t-d\right)
^{-\alpha+\beta}\nonumber\\
&  +\frac{\left(  -1\right)  ^{-\alpha}\Gamma\left(  -\alpha+\beta\right)
\epsilon^{-\alpha+\beta}}{\Gamma\left(  -\alpha\right)  \Gamma\left(
-\alpha+\beta+1\right)  }\text{ }_{2}F_{1}\left(  \alpha-\beta,1+\alpha
;1+\alpha-\beta;\frac{t-d}{a-d}\right)  . \label{A12}%
\end{align}

\end{document}